\providecommand{\U}[1]{\protect\rule{.1in}{.1in}}
\newtheorem{theorem}{Theorem}[section]
\newtheorem{corollary}[theorem]{Corollary}
\theoremstyle{definition}
\newtheorem{examples}[theorem]{Examples}
\newtheorem{note}[theorem]{Note}
\numberwithin{equation}{section}
\definecolor{mygreen}{rgb}{0,0.6,0}
\definecolor{mygray}{rgb}{0.5,0.5,0.5}
\definecolor{mymauve}{rgb}{0.58,0,0.82}
\definecolor{lightgray}{rgb}{0.95,0.95,0.95}
\begin{document}
\author[D.I. Dais and I. Markakis]{Dimitrios I. Dais and Ioannis Markakis}
\address{University of Crete, Department of Mathematics and Applied Mathematics,
Division Algebra and Geometry, Voutes Campus, P.O. Box 2208, GR-70013,
Heraklion, Crete, Greece}
\email{ddais@math.uoc.gr/johnmarkakis95@gmail.com}
\subjclass[2010]{52B20 (Primary); 14M25, 14Q10 (Secondary)}
\title[minimal generating systems for some special toric ideals]{Computing minimal generating systems \\for some special toric ideals}
\date{}

\begin{abstract}
Let $X_{P}$ be the projective toric surface associated to a lattice polygon
$P$. If the number of lattice points lying on the boundary of $P$ is at least
$4$, it is known that $X_{P}$ is embeddable into a suitable projective space
as zero set of finitely many quadrics. In this case, the determination of a
minimal generating system of the toric ideal defining $X_{P}$ is reduced to a
simple Gaussian elimination.

\end{abstract}
\maketitle

\section{Introduction\label{INTRO}}

\noindent Let $P\subset\mathbb{R}^{2}$ be a \textit{lattice polygon,} i.e., a
(convex, 2-dimensional) polygon, all of whose vertices belong to
$\mathbb{Z}^{2}.$ $P$ is known to be \textit{normal} and \textit{very ample},
and to have a canonical presentation%
\[
P=\left\{  \mathbf{x}\in\mathbb{R}^{2}\left\vert \left\langle \mathbf{x}%
,\mathbf{u}_{F}\right\rangle \geq-a_{F}\text{ for all }F\in\mathcal{F}%
(P)\right.  \right\}  ,
\]
where $\mathcal{F}(P)$ is the set of the facets (edges) of $P,$ $a_{F}%
\in\mathbb{Z}$ and $\mathbf{u}_{F}\in\mathbb{Z}^{2}$ the inward-pointing facet
normal, i.e., the minimal generator of the ray $\mathbb{R}_{\geq0}%
\mathbf{u}_{F}.$ The corresponding compact complex toric surface $X_{P}$ is
therefore normal and projective, and
\[
D_{P}:= {\displaystyle\sum_{F\in\mathcal{F}(P)}} a_{F}\,\overline
{\text{orb}(\mathbb{R}_{\geq0}\mathbf{u}_{F})}%
\]
is a very ample Cartier divisor on $X_{P},$ where $\overline{\text{orb}%
(\mathbb{R}_{\geq0}\mathbf{u}_{F})}$ is the Zariski closure of the orbit of
the ray $\mathbb{R}_{\geq0}\mathbf{u}_{F}$ w.r.t. the natural Hom$_{\mathbb{Z}%
}(\mathbb{Z}^{2},\mathbb{C}^{\ast})$-action. (See \cite[Corollaries 2.2.13 and
2.2.19 (b), pp. 70-71, (4.2.6), p. 182, and Proposition 6.1.10 (c), p. 269
]{CLS}.) Setting $\delta_{P}:=\sharp(P\cap\mathbb{Z}^{2})-1,$ the complete
linear system $\left\vert D_{P}\right\vert $ induces the closed embedding
$\Phi_{\left\vert D_{P}\right\vert }$,%

\[
\xymatrix{\mathbb{T} \hspace{0.2cm} \ar@{^{(}->}[rr]^{\iota} \ar@/_1.7pc/[rrrr] & & X_{P} \hspace{0.2cm} \ar@{^{(}->}[rr]^{\Phi_{\left\vert
			D_{P}\right\vert }} & & \mathbb{P}_{\mathbb{C}}^{\delta_{P}}}
\]
with
\[
\mathbb{T}\ni t\longmapsto(\Phi_{\left\vert D_{P}\right\vert }\circ
\iota)(t):=[...:z_{(i,j)}:...]_{(i,j)\in P\cap\mathbb{Z}^{2}}\in
\mathbb{P}_{\mathbb{C}}^{\delta_{P}},\ z_{(i,j)}:=\chi^{(i,j)}(t),
\]
where $\chi^{(i,j)}:\mathbb{T}\rightarrow\mathbb{C}^{\ast}$ is the character
associated to the lattice point $(i,j)$ (with $\mathbb{T}$ denoting the
algebraic torus Hom$_{\mathbb{Z}}(\mathbb{Z}^{2},\mathbb{C}^{\ast})$), for all
$(i,j)\in P\cap\mathbb{Z}^{2}.$ The image $\Phi_{\left\vert D_{P}\right\vert
}(X_{P})$ of $X_{P}$ under $\Phi_{\left\vert D_{P}\right\vert }$ is the
Zariski closure of Im$(\Phi_{\left\vert D_{P}\right\vert }\circ\iota)$ in
$\mathbb{P}_{\mathbb{C}}^{\delta_{P}}$ and can be viewed as the projective
variety Proj$(S_{P}),$ where
\[
S_{P}:=\mathbb{C}[C(P)\cap\mathbb{Z}^{3}]=%
{\displaystyle\bigoplus\limits_{\kappa=0}^{\infty}}
\left(
{\displaystyle\bigoplus\limits_{(i,j)\in(\kappa P)\cap\mathbb{Z}^{2}}}
\mathbb{C\cdot}\chi^{(i,j)}s^{\kappa}\right)  \
\]
$($with $C(P):=\{(\lambda y_{1},\lambda y_{2},\lambda)\left\vert \lambda
\in\mathbb{R}_{\geq0}\text{ and }(y_{1},y_{2})\in P\right.  \})$ is the
semigroup algebra which is naturally graded by setting deg$(\chi
^{(i,j)}s^{\kappa}):=\kappa.$ (For a detailed exposition see \cite[Theorem
2.3.1, p. 75; Proposition 5.4.7, pp. 237-238; Theorem 5.4.8, pp. 239-240, and
Theorem 7.1.13, pp. 325-326]{CLS}.) Equivalently, it can be viewed as the zero
set $\mathbb{V}(I_{\mathcal{A}_{P}})\subset\mathbb{P}_{\mathbb{C}}^{\delta
_{P}}$ of the homogeneous ideal $I_{\mathcal{A}_{P}}:=$ Ker$(\pi_{P}),$ where%
\[
\mathcal{A}_{P}:=\left\{  (i,j,1)\left\vert (i,j)\in P\cap\mathbb{Z}%
^{2}\right.  \right\}  \subset\mathbb{Z}^{2}\times\{1\}\subset\mathbb{Z}^{3},
\]
and $\pi_{P}$ is the $\mathbb{C}$-algebra homomorphism
\[
\mathbb{C}[...:z_{(i,j)}:....]_{(i,j)\in P\cap\mathbb{Z}^{2}}\overset{\pi_{P}%
}{\longrightarrow}\mathbb{C}[...,\chi^{(i,j,1)},....]_{(i,j,1)\in
\mathcal{A}_{P}},\ \ z_{(i,j)}\longmapsto\chi^{(i,j,1)}.
\]

\begin{theorem}
[Koelman \cite{Koelman}]\label{KOELMANTHM}If $\sharp(\partial P\cap
\mathbb{Z}^{2})\geq4,$ then $I_{\mathcal{A}_{P}}$ is generated by all possible
quadratic binomials, i.e.,{\small
\[
I_{\mathcal{A}_{P}}=\left\langle \left\{  z_{(i_{1},j_{1})}z_{(i_{2},j_{2}%
)}-z_{(i_{1}^{\prime},j_{1}^{\prime})}z_{(i_{2}^{\prime},j_{2}^{\prime}%
)}\left\vert
\begin{array}
[c]{c}%
(i_{1},j_{1}),(i_{2},j_{2}),(i_{1}^{\prime},j_{1}^{\prime}),(i_{2}^{\prime
},j_{2}^{\prime})\in P\cap\mathbb{Z}^{2},\medskip\\
\text{\emph{with} }(i_{1},j_{1})+(i_{2},j_{2})=(i_{1}^{\prime},j_{1}^{\prime
})+(i_{2}^{\prime},j_{2}^{\prime})
\end{array}
\right.  \right\}  \right\rangle .
\]
}
\end{theorem}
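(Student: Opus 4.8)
The plan is to translate the statement into a purely combinatorial fact about the lattice points of $P$ and then to reduce that fact to a single elementary geometric lemma.

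Write $S:=\mathbb{C}[\dots,z_{(i,j)},\dots]_{(i,j)\in P\cap\mathbb{Z}^{2}}$ for the ambient polynomial ring, and for a multiset $A$ of points of $P\cap\mathbb{Z}^{2}$ let $\sigma(A)\in\mathbb{Z}^{2}$ be the vector sum of its elements. By the standard dictionary between toric ideals and lattice-point configurations, the degree-$d$ part of $I_{\mathcal{A}_{P}}$ is the $\mathbb{C}$-span of the binomials $z^{A}-z^{B}$ with $|A|=|B|=d$ and $\sigma(A)=\sigma(B)$; here one uses that $P$ is normal, so $(\kappa P)\cap\mathbb{Z}^{2}$ is the $\kappa$-fold sumset of $P\cap\mathbb{Z}^{2}$. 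In particular $(I_{\mathcal{A}_{P}})_{2}$ is spanned exactly by the quadratic binomials in the statement, and $I_{\mathcal{A}_{P}}=0$ in degrees $\le 1$, so the theorem is equivalent to $(I_{\mathcal{A}_{P}})_{d}=S_{d-2}\cdot(I_{\mathcal{A}_{P}})_{2}$ for all $d\ge 2$. Since $S_{d-2}\cdot(I_{\mathcal{A}_{P}})_{2}$ is spanned by the differences $z^{M\cup\{p_{1},p_{2}\}}-z^{M\cup\{p_{1}',p_{2}'\}}$ with $|M|=d-2$ and $p_{1}+p_{2}=p_{1}'+p_{2}'$ in $P\cap\mathbb{Z}^{2}$, a telescoping argument (with the obvious separating functional for the converse) shows this is equivalent to: \emph{for every $d$ and every pair of size-$d$ multisets $A,B$ in $P\cap\mathbb{Z}^{2}$ with $\sigma(A)=\sigma(B)$, one can pass from $A$ to $B$ by a finite sequence of quadratic moves}, a quadratic move replacing a sub-pair $\{p_{1},p_{2}\}$ of the current multiset by a pair $\{p_{1}',p_{2}'\}\subset P\cap\mathbb{Z}^{2}$ with $p_{1}+p_{2}=p_{1}'+p_{2}'$. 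For $d\le 2$ this is a single move, so the content lies in $d\ge 3$.

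To control the moves I would introduce a monotone quantity. Fix a linear functional $\ell\colon\mathbb{R}^{2}\to\mathbb{R}$ injective on the finite set $P\cap\mathbb{Z}^{2}$, and attach to a size-$d$ multiset $A$ its \emph{profile}, the nondecreasing tuple of $\ell$-values of its elements. Among all size-$d$ multisets with a fixed sum $s$ there is a unique one of lexicographically least profile (the profile determines the multiset, by injectivity of $\ell$); call it $\widehat{A}_{d,s}$. It then suffices to prove the \textbf{Descent Lemma}: if $|A|=|B|=d\ge 3$, $\sigma(A)=\sigma(B)$ and $\operatorname{profile}(B)<_{\mathrm{lex}}\operatorname{profile}(A)$, then some quadratic move applied to $A$ strictly decreases its profile. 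Indeed, applying this repeatedly with $B:=\widehat{A}_{d,s}$ drives an arbitrary $A$ down to $\widehat{A}_{d,s}$ in finitely many steps (profiles take finitely many values, and one can only get stuck at the minimizer), so every fiber is connected. For the lemma, write $A$ and $B$ in sorted order $a_{1}\le\cdots\le a_{d}$, $b_{1}\le\cdots\le b_{d}$, and let $k$ be the first index with $\ell(a_{k})\ne\ell(b_{k})$; then $a_{i}=b_{i}$ for $i<k$, $\ell(b_{k})<\ell(a_{k})=:\mu$, and $k\le d-1$ (if $k=d$ the two profiles would have unequal sums). A short check shows that exchanging a suitable pair $\{a_{j},a_{m}\}$ (with $j\le k$, $m\ge k+1$) for $\{c,\,a_{j}+a_{m}-c\}$, where $c\in P\cap\mathbb{Z}^{2}$ satisfies $\ell(c)<\mu$, yields a multiset of strictly smaller profile. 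So the whole theorem reduces to the \textbf{Exchange Claim}: with the above data, such $j,m$ and a point $c\in P\cap\mathbb{Z}^{2}$ exist with $\ell(c)<\mu$ and $a_{j}+a_{m}-c\in P$. Via the facet presentation $P=\{x:\langle x,\mathbf{u}_{F}\rangle\ge -a_{F}\}$ and the heights $h_{F}(x):=\langle x,\mathbf{u}_{F}\rangle+a_{F}\ge 0$, the condition $a_{j}+a_{m}-c\in P$ reads $h_{F}(c)\le h_{F}(a_{j})+h_{F}(a_{m})$ for every facet $F$; equivalently, one wants a nonzero lattice vector $w$ with $\ell(w)>0$, $a_{j}-w\in P$ and $a_{m}+w\in P$.

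The Exchange Claim is where I expect the real work to be, and it is exactly here that the hypothesis $\sharp(\partial P\cap\mathbb{Z}^{2})\ge 4$ enters. Note $a_{k}$ cannot be the $\ell$-minimal lattice point of $P$ (since $b_{k}$ lies strictly below it), so the translate $a_{k}-P$ genuinely protrudes into the half-plane $\{\ell>0\}$; and because $d\ge 3$ there is a second, ``high'' element available to absorb the compensating vector $w$. One then has to show that the relevant translates of $P$ must share a lattice point, which I would do by a finite case analysis on the local geometry of $P$ at its lowest vertex (or, dually, by a counting argument playing $\sigma(A)=\sigma(B)$ against the facet inequalities); the point of the hypothesis is that it excludes precisely the thin lattice triangles, i.e.\ those with $\partial P\cap\mathbb{Z}^{2}$ equal to the three vertices, among which lie genuine counterexamples such as $P=\operatorname{conv}\{(0,0),(1,0),(2,3)\}$, where $(0,0)+(1,0)+(2,3)=3\cdot(1,1)$ gives an irreducible cubic $z_{(0,0)}z_{(1,0)}z_{(2,3)}-z_{(1,1)}^{3}\in I_{\mathcal{A}_{P}}$ with no quadric present, so the claim fails there. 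An alternative route to the same end is to prove that $\sharp(\partial P\cap\mathbb{Z}^{2})\ge 4$ forces $P$ to admit a regular unimodular triangulation that is \emph{flag} (every triangle of edges bounds a $2$-face); for such a triangulation a suitable initial ideal of $I_{\mathcal{A}_{P}}$ is the Stanley--Reisner ideal of a flag complex, hence generated by squarefree quadrics, whence Koelman's conclusion (indeed the Koszul property) follows at once.
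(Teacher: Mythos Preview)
The paper does not prove this theorem at all: it is quoted from Koelman's 1993 article and used as a black box, so there is no ``paper's own proof'' to compare against. Your proposal is therefore not a variant of the paper's argument but an attempt to supply what the paper omits.

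As an outline, the reduction you set up is sound and standard: generation of $I_{\mathcal{A}_{P}}$ in degree $2$ is equivalent to the connectedness of each fiber $\{A:\sigma(A)=s,\ |A|=d\}$ under quadratic moves, and your descent scheme via a generic linear functional and lex-minimal profiles is a reasonable way to organize this. With $j=k$ the profile does indeed strictly drop, as you claim.

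The genuine gap is that you never prove the \textbf{Exchange Claim}, and this is precisely where the entire difficulty of Koelman's theorem lives. You write that you ``would do'' it ``by a finite case analysis on the local geometry of $P$ at its lowest vertex'' or ``by a counting argument,'' but neither is carried out, and it is not at all clear how either would go: the hypothesis $\sharp(\partial P\cap\mathbb{Z}^{2})\ge 4$ must be invoked in a specific way, and you have not identified where. Knowing that $a_{k}$ is not the $\ell$-minimal lattice point and that $d\ge 3$ gives a second element $a_{m}$ is far from enough to guarantee a lattice point $c\in P$ with $\ell(c)<\ell(a_{k})$ \emph{and} $a_{k}+a_{m}-c\in P$; the obstruction is exactly the kind of thinness your counterexample $\operatorname{conv}\{(0,0),(1,0),(2,3)\}$ exhibits, and you have not shown why $\ge 4$ boundary points rules it out. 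Your alternative route through flag regular unimodular triangulations is likewise only asserted: that every lattice polygon with $\sharp(\partial P\cap\mathbb{Z}^{2})\ge 4$ admits such a triangulation is itself a theorem of roughly the same strength as Koelman's, not an easier fact one can appeal to. In short, the proposal isolates the right combinatorial statement but stops exactly at the point where a proof is required.
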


\begin{corollary}
[{Castryck \& Cools \cite[\S 2]{CC}}]\label{CCCOR copy(1)}If $\sharp(\partial
P\cap\mathbb{Z}^{2})\geq4,$ and if we denote by $\beta_{P}$ the cardinality of
any minimal system of quadrics generating the ideal $I_{\mathcal{A}_{P}},$
then%
\begin{equation}
\beta_{P}=\tbinom{\delta_{P}+2}{2}-\sharp(2P\cap\mathbb{Z}^{2}).
\label{BETAQFORMULA}%
\end{equation}

\end{corollary}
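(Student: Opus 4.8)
The plan is to reduce the evaluation of $\beta_{P}$ to a single $\mathbb{C}$-dimension count in degree $2$, exploiting the fact that, by Theorem~\ref{KOELMANTHM}, under the hypothesis $\sharp(\partial P\cap\mathbb{Z}^{2})\geq4$ the ideal $I_{\mathcal{A}_{P}}$ is generated by its degree-$2$ component. Throughout, write $R:=\mathbb{C}[z_{(i,j)}:(i,j)\in P\cap\mathbb{Z}^{2}]$ for the homogeneous coordinate ring of $\mathbb{P}_{\mathbb{C}}^{\delta_{P}}$, graded by $\deg z_{(i,j)}=1$, and let $\mathfrak{m}$ denote its irrelevant maximal ideal $(z_{(i,j)}:(i,j)\in P\cap\mathbb{Z}^{2})$.

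First I would record that $I_{\mathcal{A}_{P}}=R\cdot(I_{\mathcal{A}_{P}})_{2}$ by Koelman's theorem. Consequently $I_{\mathcal{A}_{P}}$ is supported in degrees $\geq2$, so $(\mathfrak{m}\,I_{\mathcal{A}_{P}})_{2}=0$, whereas $(I_{\mathcal{A}_{P}})_{d}=R_{d-2}\cdot(I_{\mathcal{A}_{P}})_{2}\subseteq(\mathfrak{m}\,I_{\mathcal{A}_{P}})_{d}$ for every $d\geq3$. By the graded Nakayama lemma it follows that every minimal homogeneous generating system of $I_{\mathcal{A}_{P}}$ consists of quadrics only and has cardinality $\dim_{\mathbb{C}}(I_{\mathcal{A}_{P}}/\mathfrak{m}\,I_{\mathcal{A}_{P}})_{2}=\dim_{\mathbb{C}}(I_{\mathcal{A}_{P}})_{2}$; and since $I_{\mathcal{A}_{P}}$ is generated in the single degree $2$, a minimal system of quadrics generating it is precisely a minimal homogeneous generating system. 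Hence $\beta_{P}=\dim_{\mathbb{C}}(I_{\mathcal{A}_{P}})_{2}$, and it remains to compute this dimension.

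For that I would use the graded surjection $\pi_{P}\colon R\twoheadrightarrow S_{P}$ with kernel $I_{\mathcal{A}_{P}}$. Restricting to degree $2$ yields the short exact sequence $0\to(I_{\mathcal{A}_{P}})_{2}\to R_{2}\to(S_{P})_{2}\to0$, whence $\dim_{\mathbb{C}}(I_{\mathcal{A}_{P}})_{2}=\dim_{\mathbb{C}}R_{2}-\dim_{\mathbb{C}}(S_{P})_{2}$. The term $\dim_{\mathbb{C}}R_{2}$ is just the number of degree-$2$ monomials in the $\delta_{P}+1$ variables $z_{(i,j)}$, namely $\binom{\delta_{P}+2}{2}$; and the description of $S_{P}$ as a semigroup algebra exhibits $\{\chi^{(a,b)}s^{2}:(a,b)\in(2P)\cap\mathbb{Z}^{2}\}$ as a $\mathbb{C}$-basis of $(S_{P})_{2}$, so $\dim_{\mathbb{C}}(S_{P})_{2}=\sharp(2P\cap\mathbb{Z}^{2})$. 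Substituting these two values gives \eqref{BETAQFORMULA}.

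The step I expect to demand the most care is the surjectivity of $\pi_{P}$ onto $(S_{P})_{2}$ --- equivalently, the claim that every lattice point of $2P$ is a sum of two lattice points of $P$, so that the images $\pi_{P}(z_{(i_{1},j_{1})}z_{(i_{2},j_{2})})=\chi^{(i_{1}+i_{2},\,j_{1}+j_{2})}s^{2}$ already span $(S_{P})_{2}$. This is exactly the normality (integer decomposition property) of the lattice polygon $P$ recalled in Section~\ref{INTRO}; without it the count of $\dim_{\mathbb{C}}(S_{P})_{2}$ would be incorrect. The remaining ingredients --- the graded Nakayama bookkeeping of the first step and the two elementary dimension counts --- are routine once one knows that $I_{\mathcal{A}_{P}}$ is generated in degree $2$.
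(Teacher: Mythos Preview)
Your argument is correct and follows essentially the same route as the paper: both compute $\beta_{P}=\dim_{\mathbb{C}}(I_{\mathcal{A}_{P}})_{2}$ via the rank--nullity identity for the degree-$2$ part of $\pi_{P}$ (the paper writes this map as $f\colon\text{HP}_{2}(\mathbb{P}_{\mathbb{C}}^{\delta_{P}})\to\mathbb{C}[x^{\pm1},y^{\pm1}]$), invoking Koelman's theorem for generation in degree $2$ and normality of $P$ for surjectivity onto the span of $\{x^{i}y^{j}:(i,j)\in 2P\cap\mathbb{Z}^{2}\}$. The only cosmetic difference is that you justify $\beta_{P}=\dim_{\mathbb{C}}(I_{\mathcal{A}_{P}})_{2}$ via the graded Nakayama lemma, whereas the paper appeals to \cite[Lemma~4.1]{Sturmfels} together with the non-degeneracy of $\mathbb{V}(I_{\mathcal{A}_{P}})$ in $\mathbb{P}_{\mathbb{C}}^{\delta_{P}}$.
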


\begin{proof}
If HP$_{2}(\mathbb{P}_{\mathbb{C}}^{\delta_{P}}):=\left\{  \text{homogeneous
polynomials (in }\delta_{P}+1\text{ variables) of degree }2\right\}  ,$ then
the $\mathbb{C}$-vector space homomorphism
\[
f:\text{HP}_{2}(\mathbb{P}_{\mathbb{C}}^{\delta_{P}})\longrightarrow
\mathbb{C}\left[  x^{\pm1},y^{\pm1}\right]  ,\text{ mapping }z_{(i_{1},j_{1}%
)}z_{(i_{2},j_{2})}\text{ onto }x^{i_{1}+i_{2}}y^{j_{1}+j_{2}},
\]
has as kernel Ker$(f)$ the $\mathbb{C}$-vector space of homogeneous
polynomials of degree $2$ which belong to $I_{\mathcal{A}_{P}}$ and as image
Im$(f)$ the linear span of $\left.  \{x^{i}y^{j}\right\vert (i,j)\in
2P\cap\mathbb{Z}^{2}\}$ (because every lattice point in $2P$ is the sum of two
lattice points of $P,$ cf. \cite[Theorem 2.2.12, pp. 68-69]{CLS}). Taking into
account Koelman's Theorem \ref{KOELMANTHM}, \cite[Lemma 4.1, p. 31]%
{Sturmfels}, and the fact that $\mathbb{V}(I_{\mathcal{A}_{P}})$ is not
contained in any hyperplane of $\mathbb{P}_{\mathbb{C}}^{\delta_{P}},$ the
equality $\dim_{\mathbb{C}}(\text{Ker}(f))=\dim_{\mathbb{C}}(\text{HP}%
_{2}(\mathbb{P}_{\mathbb{C}}^{\delta_{P}}))-\dim_{\mathbb{C}}%
(\operatorname{Im}(f))$ gives (\ref{BETAQFORMULA}).
\end{proof}

\begin{examples}
\label{Examplintro}(i) If $a,b$ are two positive integers, then the projective
toric surface $X_{P_{a,b}}\cong\mathbb{V}(I_{\mathcal{A}_{P_{a,b}}}%
)\subset\mathbb{P}_{\mathbb{C}}^{\delta_{P}}$ which is associated to the
lattice quadrilateral
\[
P_{a,b}:=\text{conv}(\{(0,0),(a,0),(b,1),(0,1)\})
\]
(where \textquotedblleft conv\textquotedblright\ stands for \textit{convex
hull}, and $\delta_{P_{a,b}}=a+b+1$), is isomorphic to the intersection of
\begin{align*}
\beta_{P_{a,b}}  &  =\tbinom{\delta_{P_{a,b}}+2}{2}-\sharp(2P_{a,b}%
\cap\mathbb{Z}^{2})\smallskip\\
&  =\tfrac{(a+b+2)(a+b+3)}{2}-3(a+b+1)=\allowbreak\tfrac{1}{2}\left(
a+b-1\right)  \left(  a+b\right)
\end{align*}
quadrics, i.e., to the \textit{rational normal scroll of type} $(a,b)$ w.r.t.
the homogeneous coordinates $[...:z_{(i,j)}:....]_{(i,j)\in P_{a,b}%
\cap\mathbb{Z}^{2}}$ satisfying the \textquotedblleft2-minors
condition\textquotedblright\
\[
\text{rank}\left(
\begin{array}
[c]{cccccccc}%
z_{(0,0)} & z_{(1,0)} & \cdots & z_{(a-1,0)} & z_{(0,1)} & z_{(1,1)} & \cdots
& z_{(b-1,1)}\\
z_{(1,0)} & z_{(2,0)} & \cdots & z_{(a,0)} & z_{(1,1)} & z_{(2,1)} & \cdots &
z_{(b,1)}%
\end{array}
\right)  \leq1.
\]
In particular, for $a=b=1,$ $X_{P_{1,1}}\cong\mathbb{V}(z_{(0,0)}%
z_{(1,1)}-z_{(1,0)}z_{(0,1)})\subset\mathbb{P}_{\mathbb{C}}^{3}$ can be viewed
as the classical (smooth) \textit{quadric hypersurface in} $\mathbb{P}%
_{\mathbb{C}}^{3}$ (which is isomorphic to $\mathbb{P}_{\mathbb{C}}^{1}%
\times\mathbb{P}_{\mathbb{C}}^{1}$ and birationally equivalent to
$\mathbb{P}_{\mathbb{C}}^{2},$ cf. Figure \ref{Fig.1}).

\begin{figure}[th]
\includegraphics[height=4cm,width=11cm]{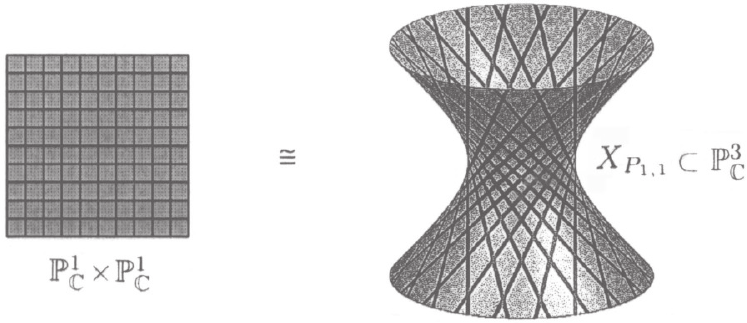} \caption{} \label{Fig.1}%
\end{figure}

\noindent(ii) Let $d$ be a positive integer. The $\mathbb{C}$-vector space
\[
\mathbb{C}[\mathsf{X}_{0},\mathsf{X}_{1},\mathsf{X}_{2}]_{d}:=\left\{
F\in\mathbb{C}[\mathsf{X}_{0},\mathsf{X}_{1},\mathsf{X}_{2}]\left\vert F\text{
homogeneous of degree }d\right.  \right\}  \cup\{\mathbf{0}\}
\]
has the set $\{\mathsf{X}_{0}^{\alpha_{0}}\mathsf{X}_{1}^{\alpha_{1}%
}\mathsf{X}_{2}^{\alpha_{2}}\left\vert (\alpha_{0},\alpha_{1},\alpha_{2}%
)\in\mathcal{E}_{2,d}\right.  \}$ as one of its bases, where%
\[
\mathcal{E}_{2,d}:=\left\{  \boldsymbol{\alpha}=(\alpha_{0},\alpha_{1}%
,\alpha_{2})\in\mathbb{Z}^{3}\left\vert \alpha_{0},\alpha_{1},\alpha_{2}%
\in\lbrack0,d]\text{ and }\alpha_{0}+\alpha_{1}+\alpha_{2}=d\right.  \right\}
.
\]
For each $\boldsymbol{\alpha}=(\alpha_{0},\alpha_{1},\alpha_{2})\in
\mathcal{E}_{2,d}$ we write $\mathsf{X}^{\boldsymbol{\alpha}}:=\mathsf{X}%
_{0}^{\alpha_{0}}\mathsf{X}_{1}^{\alpha_{1}}\mathsf{X}_{2}^{\alpha_{2}}.$
Setting
\[
\text{Tr}_{d}:=\text{conv}(\{(0,0),(d,0),(0,d)\})=\left\{  (x_{1},x_{2}%
)\in\mathbb{R}^{2}\left\vert x_{1}\geq0,x_{2}\geq0\text{ and }x_{1}+x_{2}\leq
d\right.  \right\}
\]
we see that $\sharp(\partial$Tr$_{d}\cap\mathbb{Z}^{2})=3d$ and $\sharp
($Tr$_{d}\cap\mathbb{Z}^{2})=\tbinom{d+2}{2},$ because%
\[
\text{Tr}_{d}\cap\mathbb{Z}^{2}\ni(m_{1},m_{2})\longmapsto(m_{1}+1,m_{1}%
+m_{2}+2)\in\left\{  (\xi_{1},\xi_{2})\in\mathbb{Z}_{\geq0}^{2}\left\vert
\text{ }1\leq\xi_{1}<\xi_{2}\leq d+2\right.  \right\}
\]
is a bijective map with $\sharp\left\{  (\xi_{1},\xi_{2})\in\mathbb{Z}_{\geq
0}^{2}\left\vert \text{ }1\leq\xi_{1}<\xi_{2}\leq d+2\right.  \right\}
=\tbinom{d+2}{2}.$ On the other hand,%
\[%
\begin{array}
[c]{r}%
\sharp(\mathcal{E}_{2,d})=\sharp\left\{  (\alpha_{0},\alpha_{1},\alpha_{2}%
)\in\mathbb{Z}^{3}\left\vert \alpha_{0},\alpha_{1},\alpha_{2}\in
\lbrack0,d]\text{ and }\alpha_{0}+\alpha_{1}+\alpha_{2}\leq d\right.
\right\}  \medskip\\
-\sharp\left\{  (\alpha_{0},\alpha_{1},\alpha_{2})\in\mathbb{Z}^{3}\left\vert
\alpha_{0},\alpha_{1},\alpha_{2}\in\lbrack0,d-1]\text{ and }\alpha_{0}%
+\alpha_{1}+\alpha_{2}\leq d-1\right.  \right\}  \medskip\\
=\tbinom{d+3}{3}-\tbinom{d+2}{3}=\tbinom{d+2}{2}=\sharp(\text{Tr}_{d}%
\cap\mathbb{Z}^{2}).
\end{array}
\]
If $d\geq2,$ then using the homogeneous coordinates $[...:z_{(i,j)}%
:....]_{(i,j)\in\text{Tr}_{d}\cap\mathbb{Z}^{2}}$ we conclude (by Koelman's
Theorem \ref{KOELMANTHM}) that
\[
X_{\text{Tr}_{d}}\cong\mathbb{V}(I_{\mathcal{A}_{\text{Tr}_{d}}}%
)\subset\mathbb{P}_{\mathbb{C}}^{\delta_{\text{Tr}_{d}}},\text{ with }%
\delta_{\text{Tr}_{d}}=\tbinom{d+2}{2}-1,\text{ and}%
\]
{\small
\[
I_{\mathcal{A}_{\text{Tr}_{d}}}=\left\langle \left\{  z_{(i_{1},j_{1}%
)}z_{(i_{2},j_{2})}-z_{(i_{1}^{\prime},j_{1}^{\prime})}z_{(i_{2}^{\prime
},j_{2}^{\prime})}\left\vert
\begin{array}
[c]{c}%
(i_{1},j_{1}),(i_{2},j_{2}),(i_{1}^{\prime},j_{1}^{\prime}),(i_{2}^{\prime
},j_{2}^{\prime})\in\text{Tr}_{d}\cap\mathbb{Z}^{2},\medskip\\
\text{with }(i_{1},j_{1})+(i_{2},j_{2})=(i_{1}^{\prime},j_{1}^{\prime}%
)+(i_{2}^{\prime},j_{2}^{\prime})
\end{array}
\right.  \right\}  \right\rangle ,
\]
} i.e., that $X_{\text{Tr}_{d}}$ is isomorphic to the image of the so-called
$d$\textit{-uple} \textit{Veronese embedding}
\[
\nu_{2,d}:\mathbb{P}_{\mathbb{C}}^{2}\hookrightarrow\mathbb{P}_{\mathbb{C}%
}^{\delta_{\text{Tr}_{d}}},\ [\mathsf{X}_{0}:\mathsf{X}_{1}:\mathsf{X}%
_{2}]\longmapsto\lbrack...:\mathsf{X}^{\boldsymbol{\alpha}}%
:....]_{\boldsymbol{\alpha}\in\mathcal{E}_{2,d}}.
\]
where the monomials $\left\{  \mathsf{X}^{\boldsymbol{\alpha}}\left\vert
\boldsymbol{\alpha}\in\mathcal{E}_{2,d}\right.  \right\}  $ are arranged in a
prescribed manner (e.g., lexicographically). In fact, in this case,
\begin{align}
\beta_{\text{Tr}_{d}}  &  =\tbinom{\delta_{\text{Tr}_{d}}+2}{2}-\sharp
(2\text{Tr}_{d}\cap\mathbb{Z}^{2})\smallskip\nonumber\\
&  =\tfrac{1}{2}\tbinom{d+2}{2}(\tbinom{d+2}{2}+1)-(2d^{2}+3d+1)\nonumber\\
&  =\tfrac{(d+1)(d+2)}{4}(\tfrac{(d+1)(d+2)}{2}+1)-(2d^{2}+3d+1)=\tfrac{1}%
{8}d\left(  d+6\right)  \left(  d^{2}-1\right)  . \label{typosbeta}%
\end{align}

\end{examples}

\begin{note}
For a \texttt{Magma} code for the computation of a minimal generating system
of the ideal defining the projective toric surface associated to an
\textit{arbitrary} lattice polygon (and of much more, like Betti numbers
etc.), see \cite{CC2}. In the above mentioned particular case (in which we
deal only with \textit{quadrics}) it is enough (as we shall see in
\S \ref{ALGOR}) to collect all vectorial relations $(i_{1},j_{1})+(i_{2}%
,j_{2})=(i_{1}^{\prime},j_{1}^{\prime})+(i_{2}^{\prime},j_{2}^{\prime}),$ and
to determine a $\mathbb{C}$-linearly independent subset of the set of the
corresponding quadratic binomials $z_{(i_{1},j_{1})}z_{(i_{2},j_{2}%
)}-z_{(i_{1}^{\prime},j_{1}^{\prime})}z_{(i_{2}^{\prime},j_{2}^{\prime})}$ by
simply performing Gaussian elimination.
\end{note}

\section{The algorithm\label{ALGOR}}

\noindent{} An algorithm, implemented in \texttt{Python3} to compute a minimal
generating set for the ideal $I_{\mathcal{A}_{P}}$, given the vertex set
$\mathcal{V}(P)$ of the polygon $P$ is provided in the library
\texttt{toricIdeal.py}. The algorithm is provided by the routine
\texttt{minGenSet}, which receives a list of vertices and calls five
subroutines to compute a minimal generating set of $I_{\mathcal{A}_{P}}$.

\begin{lstlisting}[language=python]
import numpy
def minGenSet(p):
	intp=integerPoints(*vertToConst(p))
	(basis,genBin)=genBinom(intp)
	indepCol=findIndepCol(genBin)
	binom=findBinom(basis,genBin,indepCol)
	return(intp,binom)
\end{lstlisting}

More specifically, the first subroutine, \texttt{vertToConst}, produces a
complete system of facet-defining inequalities from the list $\mathcal{V}(P)$
and lower and upper bounds for the coordinates of the points of the polygon.
First, the facets are distinguished among the line segments connecting any two
vertices, using the fact that the polygon lies entirely in one of the two
closed half-planes bounded by their supporting lines and then a constraint is
created for each facet.

\begin{lstlisting}[language=Python]
def vertToConst(p):
	A=[]
	b=[]
	for i in range(len(p)-1):
		for j in range(i+1,len(p)):
			big=True
			small=True
			coef=numpy.array([(p[i,1]-p[j,1]),
				(p[j,0]-p[i,0])])
			cst=numpy.inner(coef,p[j])
			for point in p:
				diff=numpy.inner(coef,point)-cst
				if diff > 0:
					small=False
				elif diff <0:
					big=False
		if big :
			A.append(coef)
			b.append(cst)
		if small :
			A.append(-coef)
			b.append(-cst)
	A=numpy.array(A)
	b=numpy.array(b)
	x=[min(p[:,0]),max(p[:,0])]
	y=[min(p[:,1]),max(p[:,1])]
	return(A,b,x,y)
\end{lstlisting}

\noindent{} The second one, \texttt{integerPoints}, uses the constraints and by
brute force finds the lattice points of the polygon.

\begin{lstlisting}[language=python]
def integerPoints(A,b,x,y):
	intPoints=[]
	for x0 in range(x[0],x[1]+1):
		for y0 in range(y[0],y[1]+1):
			point=numpy.array([x0,y0])
			diff=numpy.dot(A,point)-b
			inInterior=True
			for coord in diff:
				if coord<0:
					inInterior=False
					break
			if inInterior:
				intPoints.append(point)
	intPoints=numpy.array(intPoints)
	return(intPoints)
\end{lstlisting}

\noindent{} The third one, \texttt{genBinom}, orders the basis elements
\[
B=\{z_{(i,j)}z_{(i^{\prime},j^{\prime})} : (i,j),(i^{\prime},j^{\prime})\in
P\cap\mathbb{Z} ^{2}\}
\]
of the $\mathbb{C}$-vector space $\text{HP}_{2}(\mathbb{P}_{\mathbb{C}%
}^{\delta_{P}})$ and finds a generating set for the ideal $I_{\mathcal{A}_{P}%
}$ by collecting all vectorial relations of the form
\[
(i_{1},j_{1})+(i_{2},j_{2})=(i_{1}^{\prime},j_{1}^{\prime})+(i_{2}^{\prime
},j_{2}^{\prime})
\]
where $(i_{1},j_{1}),(i_{2},j_{2}),(i_{1}^{\prime},j_{1}^{\prime}%
),(i_{2}^{\prime},j_{2}^{\prime})\in P\cap\mathbb{Z}^{2}$. This is returned as
a matrix containing the coefficients of the generating binomials w.r.t. the
ordered basis $B$.

\begin{lstlisting}[language=python]
def genBinom(intPoints):
	basis=[]
	for i in range(len(intPoints)):
		for j in range(i,len(intPoints)):
			bp=intPoints[i].tolist()+intPoints[j].tolist()
			basis.append(bp)
	basis=numpy.array(basis)
	genBin=[]
	for i in range(len(basis)):
		for j in range(i+1,len(basis)):
			if basis[i,0]+basis[i,2]==basis[j,0]+basis[j,2] and basis[i,1]+basis[i,3]==basis[j,1]+basis[j,3]:
				row=numpy.zeros(len(basis),
				dtype=numpy.int)
				row[i]+=1
				row[j]-=1
				genBin.append(row)
	genBin=numpy.transpose(numpy.array(genBin))
	return(basis, genBin)
\end{lstlisting}

\noindent{} The fourth one, \texttt{findIndepCol}, performs a Gauss elimination
on a matrix and finds a basis of its column space by collecting the non-zero
columns of the row echelon form of it.

\begin{lstlisting}[language=python]
from scipy.linalg import lu
def findIndepCol(A):
	U=lu(A,permute_l=True)[1]
	indepCol=[]
	for i in range(len(U)):
		for j in range(len(U[i])):
			if U[i,j]!=0:
				indepCol.append(j)
				break
	return(indepCol)
\end{lstlisting}

\noindent{} Finally, the fifth subroutine, \texttt{findBinom}, uses the matrix
given by \texttt{genBinom} and the list of $\mathbb{C}$-linearly independent
columns found by \texttt{findIndepCol} to produce a set of $\mathbb{C}%
$-linearly independent generating binomials of the ideal $I_{\mathcal{A}_{P}}$.

\begin{lstlisting}[language=python]
def findBinom(basis,genBin,indepCol):
	binom=[]
	for i in indepCol:
		j1=-1
		j2=-1
		for j in range(len(genBin)):
			if genBin[j,i]==1:
				j1=j
			elif genBin[j,i]==-1:
				j2=j
			if j1!=-1 and j2!=-1:
				break
			binomial="z_{{({},{})}}z_{{({},{})}}-z_{{({},{})}}z_{{({},{})}}".format(basis[j1][0],basis[j1][1],basis[j1][2],basis[j1][3],basis[j2][0],basis[j2][1],basis[j2][2],basis[j2][3])
	binom.append(binomial)
	return(binom)
\end{lstlisting}

\noindent{} The complexity of the \texttt{minGenSet} is polynomial of the
class $\mathcal{O}(m^{4}+n^{2})$, where $n$ is the number of vertices and $m$
an integer bounding absolutely the coordinates of the vertices.

\section{Applications\label{APPLIC}}

\noindent{} $\blacktriangleright$ \textbf{Veronese surfaces}. If
$P=\text{Tr}_{2}:=\text{conv}(\{(0,0),(2,0),(0,2)\})$ (with $d=2$ as in
\ref{Examplintro} (ii)), then the algorithm produces the following minimal
generating set of $I_{\mathcal{A}_{\text{Tr}_{2}}}$: {\small
\[%
\begin{array}
[c]{ll}%
z_{(0,0)}z_{(2,0)}-z_{(1,0)}^{2},\medskip & z_{(0,0)}z_{(0,2)}-z_{(0,1)}%
^{2},\\
z_{(0,0)}z_{(1,1)}-z_{(0,1)}z_{(1,0)},\medskip & z_{(0,1)}z_{(1,1)}%
-z_{(0,2)}z_{(1,0)},\\
z_{(0,2)}z_{(2,0)}-z_{(1,1)}^{2}, & z_{(0,1)}z_{(2,0)}-z_{(1,0)}z_{(1,1)}.
\end{array}
\]
}Analogously, if $P=\text{Tr}_{3}:=\text{conv}(\{(0,0),(3,0),(0,3)\})$ (with
$d=3$), then the $27$ quadrics

{\scriptsize
\[%
\begin{array}
[c]{lll}%
z_{(1,1)}z_{(3,0)}-z_{(2,0)}z_{(2,1)},\medskip & z_{(0,3)}z_{(2,1)}%
-z_{(1,2)}^{2}, & z_{(0,0)}z_{(1,2)}-z_{(0,2)}z_{(1,0)},\\
z_{(0,3)}z_{(3,0)}-z_{(1,2)}z_{(2,1)},\medskip & z_{(0,1)}z_{(3,0)}%
-z_{(1,0)}z_{(2,1)}, & z_{(0,1)}z_{(2,1)}-z_{(1,1)}^{2},\\
z_{(0,0)}z_{(2,0)}-z_{(1,0)}^{2},\medskip & z_{(0,2)}z_{(3,0)}-z_{(1,1)}%
z_{(2,1)}, & z_{(0,2)}z_{(2,1)}-z_{(0,3)}z_{(2,0)},\\
z_{(0,2)}z_{(2,1)}-z_{(1,1)}z_{(1,2)},\medskip & z_{(0,2)}z_{(3,0)}%
-z_{(1,2)}z_{(2,0)}, & z_{(0,1)}z_{(1,2)}-z_{(0,2)}z_{(1,1)},\\
z_{(0,0)}z_{(2,1)}-z_{(1,0)}z_{(1,1)},\medskip & z_{(0,0)}z_{(2,1)}%
-z_{(0,1)}z_{(2,0)}, & z_{(0,1)}z_{(2,1)}-z_{(0,2)}z_{(2,0)},\\
z_{(0,0)}z_{(3,0)}-z_{(1,0)}z_{(2,0)},\medskip & z_{(0,0)}z_{(0,2)}%
-z_{(0,1)}^{2}, & z_{(0,0)}z_{(1,1)}-z_{(0,1)}z_{(1,0)},\\
z_{(0,1)}z_{(0,3)}-z_{(0,2)}^{2},\medskip & z_{(1,0)}z_{(3,0)}-z_{(2,0)}%
^{2}, & z_{(0,1)}z_{(1,2)}-z_{(0,3)}z_{(1,0)},\\
z_{(0,2)}z_{(1,2)}-z_{(0,3)}z_{(1,1)},\medskip & z_{(1,2)}z_{(3,0)}%
-z_{(2,1)}^{2}, & z_{(0,1)}z_{(2,1)}-z_{(1,0)}z_{(1,2)},\\
z_{(0,0)}z_{(0,3)}-z_{(0,1)}z_{(0,2)},\medskip & z_{(0,1)}z_{(3,0)}%
-z_{(1,1)}z_{(2,0)}, & z_{(0,0)}z_{(1,2)}-z_{(0,1)}z_{(1,1)}%
\end{array}
\]
} generate minimally $I_{\mathcal{A}_{\text{Tr}_{3}}}$ (cf. (\ref{typosbeta}%
))$.\medskip$

\noindent{} $\blacktriangleright$ \textbf{Toric log del Pezzo surfaces}. These
are of the form $X_{P},$ where $P=\ell\mathring{Q}$ is the polar of an
LDP-polygon $Q\subset\mathbb{R}^{2}$ dilated by its index $\ell$. (An
\textit{LDP-polygon} $Q\subset\mathbb{R}^{2}$ is a convex polygon which
contains the origin in its interior, and its vertices belong to $\mathbb{Z}%
^{2}$ and are primitive. The \textit{index} of a polygon of this kind is
defined to be 
\[
\ell:=\min\{\left.  \kappa\in\mathbb{Z}_{>0}\right\vert
\mathcal{V}(\kappa\mathring{Q})\subset\mathbb{Z}^{2}\}.
\] 
Kasprzyk, Kreuzer \& Nill \cite[\S 6]{KKN} developed an algorithm by means of which one creates an
LDP-polygon, for given $\ell\geq2,$ by fixing a \textquotedblleft
special\textquotedblright\ edge and following a prescribed successive addition
of vertices, and produced in this way the long lists of \textit{all}
LDP-polygons for $\ell\leq17.$ An explicit study for each of these $15346$
LDP-polygons is available on the webpage \cite{Br-Kas}.)\medskip\ 

\noindent(i) Up to unimodular transformation the only \textit{reflexive
hexagon} (i.e., the only LDP-hexagon of index $1$) is%
\[
Q:=\text{ conv}\left(  \left\{
(0,1),(1,1),(1,0),(0,-1),(-1,-1),(-1,0)\right\}  \right)
\]
having
\[ 
\mathring{Q}:= \text{conv}\left(  \left\{
(1,0),(1,-1),(0,-1),(-1,0),(-1,1),(0,1)\right\}  \right) 
\]
as its polar, and
$X_{\mathring{Q}}\cong\mathbb{V}(I_{\mathcal{A}_{\mathring{Q}}})\subset
\mathbb{P}_{\mathbb{C}}^{6}$ with $I_{\mathcal{A}_{\mathring{Q}}}$ minimally
generated by the $9$ quadrics: {\small
\[%
\begin{array}
[c]{lll}%
z_{(-1,0)}z_{(1,-1)}-z_{(0,-1)}z_{(0,0)}, & z_{(-1,0)}z_{(1,0)}-z_{(0,0)}%
^{2}, & z_{(-1,0)}z_{(1,0)}-z_{(-1,1)}z_{(1,-1)},\\
z_{(-1,0)}z_{(0,0)}-z_{(-1,1)}z_{(0,-1)}, & z_{(-1,0)}z_{(0,1)}-z_{(-1,1)}%
z_{(0,0)}, & z_{(0,-1)}z_{(1,0)}-z_{(0,0)}z_{(1,-1)},\\
z_{(-1,0)}z_{(1,0)}-z_{(0,-1)}z_{(0,1)}, & z_{(-1,1)}z_{(1,0)}-z_{(0,0)}%
z_{(0,1)}, & z_{(0,0)}z_{(1,0)}-z_{(0,1)}z_{(1,-1)}.
\end{array}
\medskip
\]
} \noindent(ii) For the LDP-triangle $Q$ \textit{of index} $2$ with vertex
set
\[
\mathcal{V}(Q):=\{(0,1),(8,1),(-4,-1)\}
\]
we obtain 
\[\mathcal{V}(2\mathring{Q})=\{(1,-2),(0,-2),(-1,6)\} \text{(cf. Fig.
\ref{Fig.2})}
\] 
and $X_{2\mathring{Q}}\cong\mathbb{V}(I_{\mathcal{A}%
_{2\mathring{Q}}})\subset\mathbb{P}_{\mathbb{C}}^{6}$ with $I_{\mathcal{A}%
_{2\mathring{Q}}}$ minimally generated by the $7$ quadrics:{\small
\[%
\begin{array}
[c]{ll}%
z_{(-1,6)}z_{(1,-2)}-z_{(0,2)}^{2},\medskip & z_{(0,0)}z_{(0,2)}%
-z_{(0,1)}z_{(0,1)},\\
z_{(0,-1)}z_{(0,2)}-z_{(0,0)}z_{(0,1)},\medskip & z_{(0,-2)}z_{(0,2)}%
-z_{(0,0)}^{2},\\
z_{(0,-2)}z_{(0,2)}-z_{(0,-1)}z_{(0,1)},\medskip & z_{(0,-2)}z_{(0,0)}%
-z_{(0,-1)}^{2},\\
z_{(0,-2)}z_{(0,1)}-z_{(0,-1)}z_{(0,0)}. &
\end{array}
\medskip
\]
}\begin{figure}[th]
\includegraphics[height=7cm, width=11.5cm]{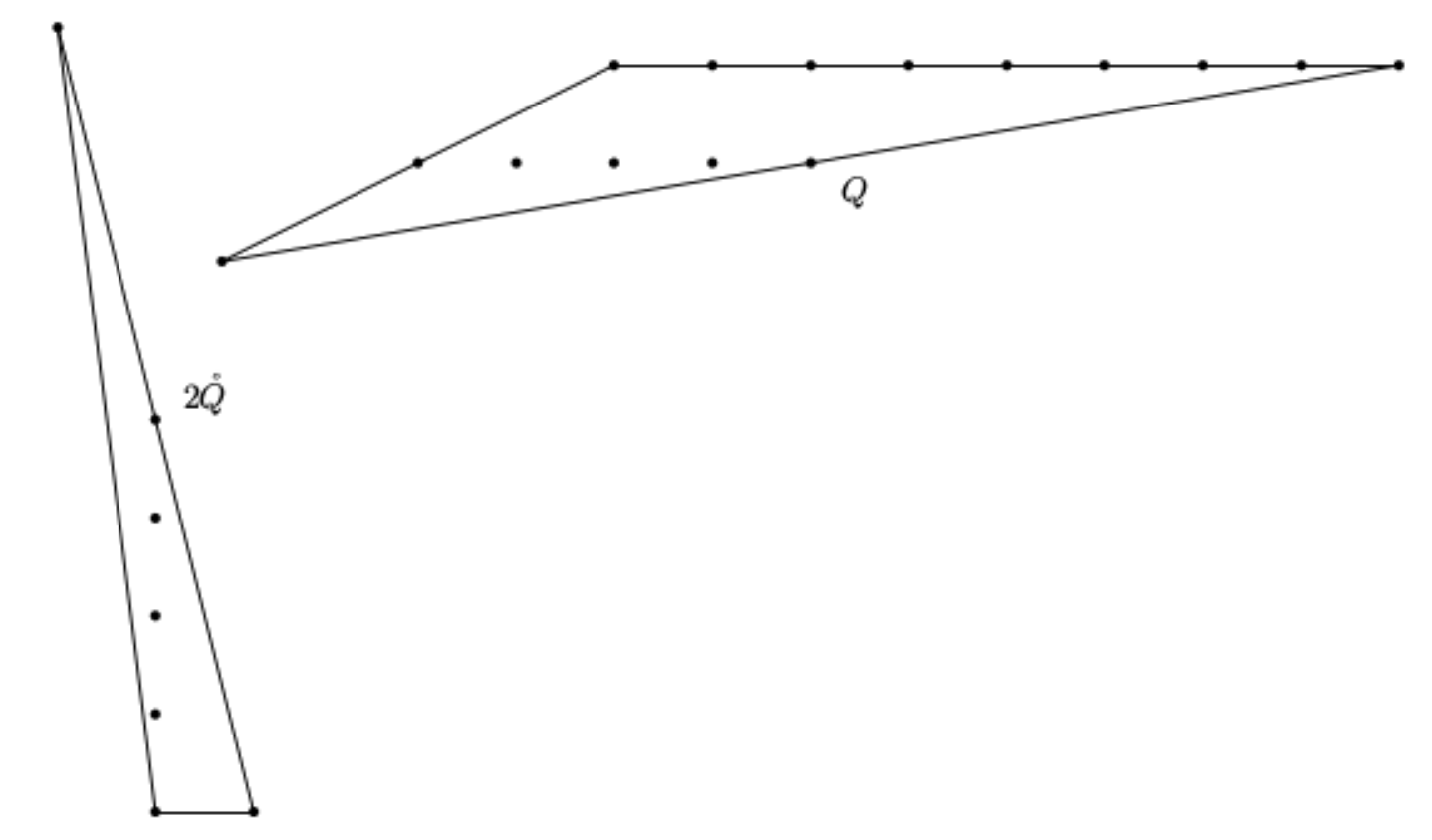} \caption{{}}%
\label{Fig.2}%
\end{figure}

\noindent(iii) For the LDP-pentagon $Q$ \textit{of index} $3$ with vertex set
\[
\mathcal{V}(Q):=\{(0,1),(1,1),(1,0),(-2,-1),(-3,-1)\}
\]
(which is unimodularly equivalent to the pentagon \textquotedblleft%
$Q_{2}^{[3]}$\textquotedblright\ of \cite{Dais}) we obtain
\[
\mathcal{V}(3\mathring{Q})=\{(2,-3),(0,-3),(-3,0),(-3,-9),(0,3)\},
\]
and $X_{3\mathring{Q}}\cong\mathbb{V}(I_{\mathcal{A}_{3\mathring{Q}}}%
)\subset\mathbb{P}_{\mathbb{C}}^{38}$ with $I_{\mathcal{A}_{3\mathring{Q}}}$
minimally generated by a set of $646$ quadrics! \medskip

\noindent(iv) For the LDP-quadrilateral $Q$ \textit{of index} $4$ with vertex
set
\[
\mathcal{V}(Q):=\{(-1,2),(3,2),(-1,-1),(-3,-2)\}
\]
we obtain $
\mathcal{V}(4\mathring{Q})=\{(2,-1),(0,-2),(-12,16),(-4,8)\},$
and $X_{4\mathring{Q}}\cong\mathbb{V}(I_{\mathcal{A}_{4\mathring{Q}}}%
)\subset\mathbb{P}_{\mathbb{C}}^{45}$ with $I_{\mathcal{A}_{4\mathring{Q}}}$
minimally generated by a set of $918$ quadrics!\medskip

\noindent(v) Finally, the LDP-triangle $Q$ \textit{of index} $5$ with vertex
set%
\[
\mathcal{V}(Q):=\{(0,1),(15,1),(-15,-2)\}
\]
we obtain
$\mathcal{V}(5\mathring{Q})=\{(1,-5),(0,-5),(-1,10)\},$
and $X_{5\mathring{Q}}\cong\mathbb{V}(I_{\mathcal{A}_{5\mathring{Q}}}%
)\subset\mathbb{P}_{\mathbb{C}}^{9}$ with $I_{\mathcal{A}_{5\mathring{Q}}}$
minimally generated by the following $21$ quadrics:{\small
\[%
\begin{array}
[c]{lll}%
z_{(0,-5)}z_{(0,-1)}-z_{(0,-3)}^{2},\medskip & z_{(0,-5)}z_{(0,1)}%
-z_{(0,-4)}z_{(0,0)}, & z_{(0,-5)}z_{(0,0)}-z_{(0,-3)}z_{(0,-2)},\\
z_{(0,-5)}z_{(0,2)}-z_{(0,-4)}z_{(0,1)},\medskip & z_{(0,-5)}z_{(0,2)}%
-z_{(0,-3)}z_{(0,0)}, & z_{(0,-3)}z_{(0,2)}-z_{(0,-2)}z_{(0,1)},\\
z_{(0,-4)}z_{(0,2)}-z_{(0,-2)}z_{(0,0)},\medskip & z_{(0,-2)}z_{(0,2)}%
-z_{(0,-1)}z_{(0,1)}, & z_{(0,-5)}z_{(0,-1)}-z_{(0,-4)}z_{(0,-2)},\\
z_{(0,-5)}z_{(0,1)}-z_{(0,-3)}z_{(0,-1)},\medskip & z_{(0,-5)}z_{(0,-3)}%
-z_{(0,-4)}^{2}, & z_{(0,-4)}z_{(0,2)}-z_{(0,-1)}^{2},\\
z_{(0,-5)}z_{(0,0)}-z_{(0,-4)}z_{(0,-1)},\medskip & z_{(0,-5)}z_{(0,1)}%
-z_{(0,-2)}^{2}, & z_{(0,-1)}z_{(0,2)}-z_{(0,0)}z_{(0,1)},\\
z_{(0,-2)}z_{(0,2)}-z_{(0,0)}^{2},\medskip & z_{(0,0)}z_{(0,2)}-z_{(0,1)}%
^{2}, & z_{(0,-5)}z_{(0,2)}-z_{(0,-2)}z_{(0,-1)},\\
z_{(0,-5)}z_{(0,-2)}-z_{(0,-4)}z_{(0,-3)}, & z_{(0,-3)}z_{(0,2)}%
-z_{(0,-1)}z_{(0,0)}, & z_{(0,-4)}z_{(0,2)}-z_{(0,-3)}z_{(0,1)}.
\end{array}
\medskip
\]
}

\end{document}